\numberwithin{equation}{section}
\newtheorem{theorem}{Theorem}[section]
\newtheorem{lemma}[theorem]{Lemma}
\newtheorem{remark}[theorem]{Remark}
\newtheorem{conjecture}[theorem]{Conjecture}
\def\enddoc{\end{document}}
\def\FRAME#1#2#3#4#5#6#7#8
\begin{document}
\author{Qingsong Gu}
\address{Department of mathematics, Nanjing University, Nanjing 210093, P. R. China} \email{qingsonggu@nju.edu.cn}
\author{Xueping Huang}
\address{Department of Mathematics, Nanjing University of Information Science and Technology,
Nanjing 210044, P. R. China}
\email{hxp@nuist.edu.cn}
\author{Yuhua Sun}
\address{School of Mathematical Sciences and LPMC, Nankai University, 300071
Tianjin, P. R. China}
\email{sunyuhua@nankai.edu.cn}

%\date{September 2018}
\title{Superlinear elliptic inequalities on weighted graphs}
\thanks{\noindent Qingsong Gu was supported by the National Natural Science Foundation of
China (Grant No.12101303). Xueping Huang was supported by the National Natural Science Foundation of China (Grant
No. 11601238) and The Startup Foundation for Introducing Talent of Nanjing University of
Information Science and Technology (Grant No. 2015r053). Yuhua Sun was supported by the National Natural Science Foundation of
China (Grant No.11501303, No.11871296), and Tianjin Natural Science Foundation (Grant No.19JCQNJC14600). 
}

\subjclass[2010]{Primary 35J61; Secondary 58J05, 31B10, 42B37}
\keywords{Semi-linear elliptic inequality, weighted graph, critical exponent, volume growth}

\begin{abstract}
Let $(V,\mu)$ be an infinite, connected, locally finite weighted graph. We study the problem of existence or non-existence of
positive solutions to a semi-linear elliptic inequality
\begin{equation*}
\Delta u+u^{\sigma}\leq0\quad \text{in}\,\,V,
\end{equation*}
where $\Delta$ is the standard graph Laplacian on $V$ and $\sigma>0$. For $\sigma\in(0,1]$, the inequality admits no nontrivial positive solution. For $\sigma>1$, assuming condition \textbf{($p_0$)} on $(V,\mu)$, we obtain a sharp condition for nonexistence of
positive solutions in terms of the volume growth of the graph, that is
\begin{equation*}
\mu(o,n)\lesssim n^{\frac{2\sigma}{\sigma-1}}(\ln n)^{\frac{1}{\sigma-1}}
 \end{equation*}
 for some $o\in V$ and all large enough $n$. For any $\varepsilon>0$, we can construct an example on a homogeneous tree $\mathbb T_N$ with $\mu(o,n)\approx n^{\frac{2\sigma}{\sigma-1}}(\ln n)^{\frac{1}{\sigma-1}+\varepsilon}$, and a solution to the inequality on $(\mathbb T_N,\mu)$ to illustrate the sharpness
 of $\frac{2\sigma}{\sigma-1}$ and $\frac{1}{\sigma-1}$.
\end{abstract}

\maketitle
\tableofcontents

\section{Introduction}

Throughout the paper, let $G=(V,E)$ be an infinite, connected, locally finite graph, where $V$ denotes the vertex set, and $E$ denotes the edge set. We allow only at most one edge for any two distinct vertices, and do not allow any edges from a vertex to itself.

Let $\mu:\ E\rightarrow[0,\infty)$ be a weight defined on its edges, where $\mu_{xy}>0$ if and only if $(x,y)\in E$ and $\mu_{xy}=\mu_{yx}$. Such graph $(V,E, \mu)$ is called a weighted graph. Since $\mu$ contains all the information of the edge set $E$, we just denote the weighted graph as $(V,\mu)$.

For a vertex $x\in V$, denote $y\sim x$ if there is an edge between $x$ and $y$, and we define $\text{deg}(x):\ =\#\{y\in V:\ y\sim x\}$ to be the degree of $x$. Let
$$\mu(x)=\sum_{y\sim x}\mu_{xy}$$
be the measure of $x$. Let $\ell(V)$ be the collection of all functions on $V$.
On the graph $(V,E)$, for any two vertices $x$ and $y$, we define $d(x,y)$ to be the minimal number of edges among all possible paths connecting $x$ and $y$ in $V$. Then $d(\cdot,\cdot)$ is a distance on $V$, we call it the graph distance.

Fix an arbitrary vertex $o\in V$. For an integer $n\geq1$, denote by $B(o,n):=\{x\in V:\ d(o,x)\leq n\}$ the closed ball in $V$ with center $o$ and radius $n$. Let us denote
\begin{equation}\label{Vol}
\mu(B(o,n))=\sum_{x\in B(o,n)}\mu(x),
\end{equation}
and the Laplace operator $\Delta$ by
\begin{equation*}
\Delta u(x)=\frac{1}{\mu(x)}\sum_{y\sim x}\mu_{xy}(u(y)-u(x)),\qquad\qquad\mbox{for $u\in\ell(V)$}.
 \end{equation*}

In this paper we are concerned with the following problem: characterize $
\sigma>0$ and $\mu(o,n)$ for which there exists a positive
solution $u$ to the following superlinear elliptic
inequality:
\begin{equation}
\Delta u+u^\sigma\leq0\quad \text{in}\,\,V.  \label{Elliptic}
\end{equation}

Recall that a weighted graph $(V,\mu)$ is called {\it recurrent} if the random walk on $(V,\mu)$ with one-step transition probability $P(x,y)=\frac{\mu_{xy}}{\mu(x)}$ is recurrent; otherwise, $(V,\mu)$ is called {\it transient}. It is known that $(V,\mu)$ is recurrent if and only if  any nonnegative superharmonic function (i.e. $\Delta u\leq0$) on $(V,\mu)$ is identically equal to a constant, which is termed as {\it parabolic}.

The notion of parabolicity of graph can be regarded as a generalization of the parabolicity of manifolds.
Let $(M, \mu)$ be a geodesically complete noncompact manifold with Riemannian measure $\mu$. Recall that in a celebrated paper of Cheng and Yau \cite{CY}, it is proved that if for some $o\in M$
\begin{eqnarray*}
\mu(B(o,r))\lesssim r^2\qquad\mbox{for all large enough $r$},
\end{eqnarray*}
then $M$ is parabolic.

Later, three authors Grigor'yan, Karp, Varopoulos \cite{G85,K,V} independently showed that if
\begin{eqnarray}\label{vol-int}
\int^{\infty}\frac{r}{\mu(B(o,r))}dr=\infty,
\end{eqnarray}
then $M$ is parabolic.

The following Nash-Williams' test for parabolicity (\ref{vol-int}) on weighted graphs is well known (e.g. \cite{W}): if
\begin{eqnarray}\label{votest}
\sum^{\infty}_{n=1}\frac{n}{\mu(B(o,n))}=\infty,
\end{eqnarray}
then the graph $(V,\mu)$ is recurrent.

Consequently, under the assumption of \eqref{votest}, then the only nonnegative solution to \eqref{Elliptic} is identically zero.
The above implies that \eqref{votest} is far from sharpness for the nonexistence result to \eqref{Elliptic}.

 In order to obtain the sharp condition,
we need some additional restriction for the graph $(V,\mu)$ for different $\sigma$. Here are our main results.

\begin{theorem}\label{result-1}
Assume $\sigma\in(0,1]$. If $u$ is a nonnegative solution to \eqref{Elliptic}, then $u\equiv0$.
\end{theorem}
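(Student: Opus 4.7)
The plan is to reduce to the case $u > 0$ everywhere by a discrete maximum principle, handle the linear case $\sigma = 1$ by an immediate rewriting, and for $\sigma \in (0,1)$ to linearize the inequality through the substitution $v = u^{1-\sigma}$ and then derive a contradiction by iterating the one-step transition kernel.

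Suppose $u \not\equiv 0$. If $u(x_0) = 0$ at some $x_0 \in V$, then the assumption $\Delta u(x_0) \leq -u(x_0)^\sigma = 0$ combined with
\[
\Delta u(x_0) = \frac{1}{\mu(x_0)}\sum_{y \sim x_0} \mu_{x_0 y}\, u(y)
\]
(which uses $u(x_0) = 0$ and $\sum_{y\sim x_0}\mu_{x_0 y} = \mu(x_0)$) together with $u \geq 0$ and $\mu_{x_0 y} > 0$ forces $u(y) = 0$ for every neighbor $y$ of $x_0$. Connectedness of $V$ then propagates this to $u \equiv 0$, a contradiction. So one may assume $u > 0$ on $V$.

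For $\sigma = 1$, a direct rearrangement gives
\[
0 \geq \Delta u(x) + u(x) = \frac{1}{\mu(x)}\sum_{y \sim x}\mu_{xy}\, u(y),
\]
which contradicts $u > 0$ and $\mu_{xy} > 0$. For $\sigma \in (0,1)$, set $v = u^{1-\sigma}$, which is everywhere strictly positive. Concavity of $t \mapsto t^{1-\sigma}$ on $(0, \infty)$ yields the pointwise tangent-line inequality $v(y) - v(x) \leq (1-\sigma)\, u(x)^{-\sigma}(u(y) - u(x))$; averaging over the neighbors of $x$ with weights $\mu_{xy}/\mu(x)$ produces
\[
\Delta v(x) \leq (1-\sigma)\, u(x)^{-\sigma}\, \Delta u(x) \leq -(1-\sigma).
\]
Writing $P(x,y) = \mu_{xy}/\mu(x)$ for the one-step transition kernel, this says $(Pv)(x) \leq v(x) - (1-\sigma)$; iterating gives $(P^n v)(x) \leq v(x) - n(1-\sigma)$ for every $n \geq 1$. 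Since $v \geq 0$ and $P$ preserves nonnegativity, $(P^n v)(x) \geq 0$, forcing $v(x) \geq n(1-\sigma)$ for all $n$, an impossibility.

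The essential move is the substitution $v = u^{1-\sigma}$, which linearizes the semilinear bound $\Delta u \leq -u^\sigma$ into a uniform ``strong superharmonicity'' estimate $\Delta v \leq -(1-\sigma)$; once this is in hand the rest is a routine iteration of the transition kernel (equivalently, the observation $\mathbb{E}_x[v(X_n)] \leq v(x) - n(1-\sigma)$ for the random walk on $(V,\mu)$). I do not anticipate any serious technical obstacle beyond identifying this linearization and the preliminary maximum-principle reduction.
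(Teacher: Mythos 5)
Your proof is correct, and for the case $0<\sigma<1$ it takes a genuinely different route from the paper. The preliminary reduction to $u>0$ via connectedness and the treatment of $\sigma=1$ coincide with the paper's (Lemma \ref{lem-positive} and the first case of its proof). For $0<\sigma<1$ the paper argues differently: it first deduces the pointwise lower bound $u\geq 1$ from $0\geq -u(x)+u(x)^{\sigma}$, then builds a chain of minimizing neighbors $x_0,x_1,\dots$ with $u(x_{i+1})\leq u(x_i)\bigl(1-u(x_i)^{\sigma-1}\bigr)$, and gets the contradiction $u(x_n)\leq u(x_0)\bigl(1-u(x_0)^{\sigma-1}\bigr)^n\to 0$ against $u\geq 1$. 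Your substitution $v=u^{1-\sigma}$ instead linearizes the problem: concavity of $t\mapsto t^{1-\sigma}$ gives $\Delta v\leq (1-\sigma)u^{-\sigma}\Delta u\leq -(1-\sigma)$, and since $P$ is positivity-preserving with $P\mathbf{1}=\mathbf{1}$ and all sums are finite by local finiteness, the iteration $(P^nv)(x)\leq v(x)-n(1-\sigma)$ is legitimate and contradicts $v\geq 0$. Your version is arguably cleaner and isolates a reusable fact (no nonnegative function on a weighted graph satisfies $\Delta v\leq -c$ for a constant $c>0$), while the paper's argument is more hands-on and makes the quantitative decay along a path explicit; neither needs condition \textbf{($p_0$)} or any volume hypothesis.
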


 We say condition \textbf{($p_0$)} is satisfied on $(V, \mu)$:
if there exists $p_0>1$ such that for any $x\sim y$ in $V$,
\begin{equation*}
\frac{\mu_{xy}}{\mu(x)}\geq \frac{1}{p_0}.
\end{equation*}

Under condition \textbf{($p_0$)}, our nonexistence result to \eqref{Elliptic} is as follows.
\begin{theorem}\label{mainresult}
Let $\sigma>1$. Assume condition \textbf{$(p_0)$} is satisfied on $(V, \mu)$. If for some $o\in V$
\begin{equation}\label{volume}
\mu(B(o,n))\lesssim n^{\frac{2\sigma}{\sigma-1}}(\ln n)^{\frac{1}{\sigma-1}} \qquad \text{ for all large enough $n$}.
\end{equation}
Then the only nonnegative solution to \eqref{Elliptic} is identically zero.
\end{theorem}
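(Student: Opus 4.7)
The plan is to argue by contradiction using the test-function (Mitidieri--Pokhozhaev) method, sharpened by an annular iteration in the logarithmic scale that captures the exact threshold $n^{2\sigma/(\sigma-1)}(\ln n)^{1/(\sigma-1)}$. Suppose $u \ge 0$ is a nontrivial solution of (\ref{Elliptic}). Since $\Delta u \le -u^{\sigma} \le 0$, the discrete strong maximum principle applies: at any putative zero $x_0$ of $u$, the identity $0 \ge \Delta u(x_0) = \mu(x_0)^{-1}\sum_{y\sim x_0}\mu_{xy}u(y)$ together with $u\ge 0$ forces $u(y)=0$ on every neighbor of $x_0$, so by connectedness $u\equiv 0$. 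Hence I may assume $u>0$ everywhere.

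Fix the critical exponent $\alpha = \tfrac{2\sigma}{\sigma-1}$ and let $\varphi:V\to[0,1]$ be a finitely supported cutoff. Summing (\ref{Elliptic}) against $\varphi^\alpha\mu(x)$ and using the self-adjointness
\[
\sum_x \Delta u(x)\,\varphi^\alpha(x)\,\mu(x) \;=\; \sum_x u(x)\,\Delta(\varphi^\alpha)(x)\,\mu(x),
\]
followed by H\"older's inequality with conjugate pair $(\sigma,\tfrac{\sigma}{\sigma-1})$, I expect to reach the master estimate
\[
\sum_x u^\sigma(x)\,\varphi^\alpha(x)\,\mu(x) \;\le\; C\sum_{x\sim y}\mu_{xy}|\varphi(y)-\varphi(x)|^{\alpha}.
\]
Condition \textbf{($p_0$)} plays its role here: since $\Delta$ obeys no exact discrete chain rule, one must bound $|\Delta(\varphi^\alpha)|$ by edgewise first differences of $\varphi$, and a uniform lower bound on $\mu_{xy}/\mu(x)$ is what makes this control possible with a $p_0$-dependent constant.

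Applying the master inequality to the cutoff $\psi_{R,R'}$ that equals $1$ on $B(o,R)$, vanishes outside $B(o,R')$, and is linear in graph distance between the two, gives the single-scale estimate $F(R):=\sum_{B(o,R)} u^\sigma\mu \le C(R'-R)^{-\alpha}\mu(B(o,R'))$. Iterating this along a sequence $R_k$ with log-scale increments $R_{k+1}-R_k \sim R_k/\ln R_k$ and telescoping, I aim to derive the strengthened integral inequality
\[
F(R) \;\le\; C\left(\int_R^{R'}\frac{r\,dr}{\mu(B(o,r))^{(\sigma-1)/\sigma}}\right)^{-\sigma/(\sigma-1)},\qquad R'>R.
\]
Substituting the hypothesis $\mu(B(o,r))\lesssim r^{\alpha}(\ln r)^{1/(\sigma-1)}$ makes the integrand $\gtrsim r^{-1}(\ln r)^{-1/\sigma}$; since $\sigma>1$ gives $1/\sigma<1$, this integral diverges as $R'\to\infty$, forcing $F(R)=0$ for every $R$ and contradicting the strict positivity of $u$.

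The main obstacle is this annular iteration. On manifolds one closes the analogous estimate via a differential inequality and smooth radial cutoffs, but in the graph setting every quantity is a finite difference, and the chain-rule deficit of the discrete Laplacian must be absorbed into condition \textbf{($p_0$)} at each step. Calibrating $\{R_k\}$ so that the telescoping produces the exact logarithmic exponent $1/(\sigma-1)$---the sharp threshold matched by the authors' companion construction on $\mathbb{T}_N$---rather than a lower power, and tracking constants tightly enough to preserve it, is the technical heart of the argument.
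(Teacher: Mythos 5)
Your overall framework (test functions, H\"older, contradiction with positivity) points in the right direction, but there is a genuine gap at exactly the place you flag as ``the technical heart,'' and the route you sketch to bridge it cannot work. The plain Mitidieri--Pohozaev estimate you call the master inequality,
\[
\sum_x u^\sigma(x)\varphi^\alpha(x)\mu(x)\;\le\;C\sum_{x\sim y}\mu_{xy}|\varphi(y)-\varphi(x)|^{\alpha},\qquad \alpha=\tfrac{2\sigma}{\sigma-1},
\]
only reaches the threshold $\mu(B(o,n))\lesssim n^{\alpha}$ \emph{without} the logarithmic factor: with a linear cutoff between $R$ and $2R$ the right-hand side is $\approx R^{-\alpha}\mu(B(o,2R))\lesssim(\ln R)^{1/(\sigma-1)}$, which diverges. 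Your proposed fix --- iterating the single-scale bound $F(R)\le C(R'-R)^{-\alpha}\mu(B(o,R'))$ --- cannot improve this, because $F$ does not appear on the right-hand side: telescoping a one-sided bound of this form yields nothing better than the optimal single choice of $R'$, which is again $\inf_{R'>R}(R'-R)^{-\alpha}\mu(B(o,R'))\approx(\ln R)^{1/(\sigma-1)}$. The claimed ``strengthened integral inequality'' is therefore not derivable from what precedes it, and without it the argument does not close at the volume growth \eqref{volume}.

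The device that actually gains the logarithm (both in the paper and in its manifold antecedent) is a \emph{perturbed} test function $\psi=\varphi^s u^{-t}$ with a small parameter $t$ tending to $0$, combined with cutoffs $\varphi_i=\tfrac1i\sum_{k=i-1}^{2i-2}h_k$ obtained by averaging linear cutoffs over $\sim i$ dyadic annuli, so that $|\nabla_{xy}\varphi_i|\lesssim (i\,2^k)^{-1}$ on the $k$-th annulus; taking $t=1/i$ makes the resulting bound on $\bigl(\sum u^\sigma\varphi_i^s\mu\bigr)^{1-\frac{t+1}{2\sigma}}$ uniformly bounded, whence $u\in L^\sigma(\mu)$, and a second pass through the key inequality (which retains a factor $\bigl(\sum_{\nabla\varphi_i\neq0}u^\sigma\varphi_i^s\mu\bigr)^{(t+1)/(2\sigma)}\to0$) gives $u\equiv0$. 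Relatedly, you misplace the role of condition \textbf{($p_0$)}: it is not needed to control $\Delta(\varphi^\alpha)$ (no chain rule is ever invoked for $\varphi$ beyond the mean value theorem), but rather to prove that any positive solution satisfies $p_0^{-1}\le u(x)/u(y)\le p_0$ across edges, which is indispensable precisely because the test function contains the negative power $u^{-t}$ and one must compare $u(x)^{-t-1}$, $u(y)^{-t-1}$ and the mean-value point between them. Your reduction to $u>0$ via connectedness is correct and matches the paper.
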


Here the notation $``H\lesssim K"$ in the above and below means that $H\leq cK$ for some constant $c>0$, and $H\thickapprox K$ means that
both $H\lesssim K$ and $K\lesssim H$ hold.

\begin{remark}
The volume growth condition \eqref{volume} is similar to that on Riemannian manifolds given in \cite{GS1}: On a geodesically complete
noncompact manifold $M$, if for some $o\in M$
\begin{eqnarray}\label{vol-M}
\mu(B(o,r))\lesssim r^{\frac{2\sigma}{\sigma-1}}(\ln r)^{\frac{1}{\sigma-1}},\quad\mbox{for all large enough $r$},
\end{eqnarray}
then the only nonnegative solution to $\Delta u+u^{\sigma}\leq0$ with $\sigma>1$ on $M$ is identically zero.

Moreover, if the volume doubling condition and Poincar\'{e} inequality are satisfied on $M$, the above nonexistence results are still valid provided that (\ref{vol-M}) is weakened by
\begin{eqnarray*}
\int^{\infty}\frac{r^{2q-1}}{\mu(B(o, r))^{q-1}}dr=\infty,
\end{eqnarray*}
see \cite[Corollary 1.2]{GSV}.
\end{remark}
We do not know whether condition {\bf($p_0$)} can be removed.
The following conjecture is motivated by \cite[Conjecture 1]{GSV}.
\begin{conjecture}
On a weighted graph $(V,\mu)$, if
\begin{equation}\label{volumeconj}
\sum_{n=1}^{\infty}\frac{n^{2\sigma-1}}{\mu(B(o,n))^{\sigma-1}}=\infty,
\end{equation}
then the only nonnegative solution to \eqref{Elliptic} is identically zero.
\end{conjecture}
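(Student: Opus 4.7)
The natural plan is to adapt the test-function (capacity) method that works for the corresponding problem on manifolds (see \cite{GS1,GSV}) to the weighted graph setting. Assuming for contradiction that a nontrivial nonnegative solution $u$ of \eqref{Elliptic} exists, one multiplies the pointwise inequality $-\Delta u \geq u^{\sigma}$ by a cutoff $\varphi^{p}$ with $p=\frac{2\sigma}{\sigma-1}$ and uses the discrete integration by parts
\[
\sum_{x\in V}(-\Delta u)(x)\,\varphi(x)^{p}\,\mu(x)=\tfrac12\sum_{x,y\in V}\mu_{xy}\,(u(y)-u(x))(\varphi(y)^{p}-\varphi(x)^{p}).
\]
Combining the Bernoulli estimate $|\varphi^{p}(y)-\varphi^{p}(x)|\leq p(\varphi(x)\vee\varphi(y))^{p-1}|\varphi(y)-\varphi(x)|$ with an edge-wise Young inequality, the target is a core test-function bound of the form
\[
\sum_{x\in V}u(x)^{\sigma}\,\varphi(x)^{p}\,\mu(x)\leq C\sum_{x\sim y}\mu_{xy}\,|\varphi(y)-\varphi(x)|^{p}.
\]

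With such a bound in hand, the proof would conclude by optimizing over radial cutoffs. Taking $\varphi(x)=\psi(d(o,x))$ with $\psi(0)=1$, $\psi(R)=0$, and writing $c_n=\sum_{x\in S_n,\,y\in S_{n+1},\,y\sim x}\mu_{xy}$ for the cut between consecutive spheres, the right-hand side reduces to $\sum_{n=0}^{R-1}c_n|\psi(n+1)-\psi(n)|^{p}$; the usual Lagrange-multiplier calculation yields the $p$-capacity-like minimum $\bigl(\sum_{n=0}^{R-1}c_n^{-1/(p-1)}\bigr)^{-(p-1)}$. A weighted Hardy / Abel summation, combined with the trivial estimate $c_n\leq\mu(S_n)\leq\mu(B(o,n))$ and a dyadic reorganization of indices, should translate divergence of the series in \eqref{volumeconj} into vanishing of this minimum as $R\to\infty$. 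Since the left-hand side is bounded below by $u(o)^{\sigma}\mu(o)$ as soon as $\psi(0)=1$, this would force $u(o)=0$, and repeating at every base point gives $u\equiv0$.

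The hard part is twofold. First, one must produce the core estimate without invoking condition \textbf{$(p_0)$}. On manifolds the absorption of the $u$-gradient uses the chain rule $|\nabla u^{s}|=s\,u^{s-1}|\nabla u|$; the natural graph analog leaves behind an unwanted edge term of the shape $\sum_{x\sim y}\mu_{xy}(u(x)\vee u(y))^{\sigma}(\varphi(x)\vee\varphi(y))^{p}$, which is comparable to $\sum_x u^{\sigma}\varphi^{p}\,\mu$ only under condition \textbf{$(p_0)$}, since then $\mu_{xy}\asymp\mu(x)$ and the elliptic Harnack principle equates $u$-values on adjacent vertices. Bypassing \textbf{$(p_0)$} seems to require either a Moser-type iteration adapted to arbitrarily degenerate conductances or a direct Green-function approach iterating $u(x)\geq\sum_y G(x,y)\,u(y)^{\sigma}\mu(y)$, neither of which is presently available in the graph literature at the required level of generality. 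Second, the naive $p$-capacity calculation produces the exponent $\frac{\sigma+1}{\sigma-1}$ in the denominator rather than the sharp exponent $\sigma-1$ appearing in \eqref{volumeconj}; on manifolds, closing this gap is the central contribution of \cite{GSV} and relies crucially on volume doubling together with a Poincar\'e inequality. A graph analog of that functional-inequality framework therefore appears indispensable for establishing the conjecture in its full strength.
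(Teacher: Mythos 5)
This statement is stated in the paper as a \emph{Conjecture}: the authors give no proof, explicitly say they do not know whether condition \textbf{($p_0$)} can be removed, and only prove the weaker Theorem \ref{mainresult} (volume growth \eqref{volume} under \textbf{($p_0$)}). So the question is whether your argument actually closes the gap, and it does not: what you have written is a program whose two decisive steps you yourself concede are missing. First, the core bound
\[
\sum_{x\in V}u(x)^{\sigma}\varphi(x)^{p}\mu(x)\leq C\sum_{x\sim y}\mu_{xy}|\varphi(y)-\varphi(x)|^{p}
\]
is not established. In the paper the analogous estimate \eqref{eq17} is obtained only after invoking Lemma \ref{th2.1}: the mean-value theorem produces intermediate points $\xi$ between $u(x)$ and $u(y)$ and $\eta$ between $\varphi(x)$ and $\varphi(y)$, and it is precisely the Harnack-type comparison \eqref{compare} (hence condition \textbf{($p_0$)}) that allows $\xi^{-t-1}$ and the $u(y)$-weights to be replaced by quantities at $x$ so that the gradient term can be absorbed. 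Without \textbf{($p_0$)} the leftover edge term $\sum_{x\sim y}\mu_{xy}(u(x)\vee u(y))^{\sigma}(\varphi(x)\vee\varphi(y))^{p}$ is not comparable to $\sum_{x}u(x)^{\sigma}\varphi(x)^{p}\mu(x)$, and no substitute (Moser iteration, Green-function iteration) is supplied. Second, even granting the core bound, the single-parameter capacity optimization you describe yields nonexistence only when $\sum_{n}c_n^{-(\sigma-1)/(\sigma+1)}=\infty$, which is strictly weaker than \eqref{volumeconj}; the sharp exponents in the paper's Theorem \ref{mainresult} already require the two-parameter test function $\varphi^{s}u^{-t}$ with $t=1/i\to0$, and passing from the pointwise volume bound \eqref{volume} to the series condition \eqref{volumeconj} requires, on manifolds, volume doubling plus a Poincar\'e inequality (\cite{GSV}), for which you provide no graph analog.

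In short, the proposal correctly identifies the obstructions but does not overcome either of them, so the conjecture remains open after your argument exactly as it was before it. A minor additional remark: once one knows $u(o)=0$ at a single vertex, Lemma \ref{lem-positive} already forces $u\equiv0$ by connectedness, so the final step of ``repeating at every base point'' is unnecessary.
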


Besides the above nonexistence result (or called Liouville theorems) for elliptic equation on graphs, a lot of attention has been paid to different types of elliptic equations on graphs, see \cite{CM, Ge, GHJ, GLY1,GLY2, GLY3, HSZ, HS, HWY, LiuY}.
there are also many literature devoted to the parabolic equations on graphs, see \cite{LW1, LW2, LY}.

The methods of the proofs of Theorems \ref{result-1} and \ref{mainresult} are motivated by the one used in \cite{GS1}.
For the proof of Theorem \ref{result-1}, the case of $\sigma=1$ is trivial, while for the case of $0<\sigma<1$, we use the iteration method to derive the nonexistence result.
For the proof of Theorem \ref{mainresult}, we first discretize the test functions used in \cite{GS1}, and then apply the estimates in terms of test functions to derive that $u\in L^{\sigma}(d\mu)$ and finally obtain $u\equiv0$ by choosing a sequence of suitable test functions. However, different from the manifold, in the graph case, in order to obtain the necessary inequalities, we need a simple a priori property of the solution $u$: the ratio of the values of $u$ on any two neighboring vertices is uniformly bounded by $p_0$, where $p_0$ is the same as that in condition {\bf($p_0$)}. We state this property as Lemma \ref{th2.1}. We remark that it is an interesting question that whether the conclusion in Theorem \ref{mainresult} still hold if we remove the condition {\bf($p_0$)}.

The structure of this paper is as follows. In Section \ref{sec2} we give the statement of Theorem \ref{mainresult}, the main result of this paper; we also give Lemma \ref{th2.1} which will be used frequently in the proof of Theorem \ref{mainresult}. In Section \ref{sec3}, we provide a proof of Theorem \ref{mainresult}; we also construct an example on $\mathbb Z$ to illustrate the sharpness of the volume growth condition in Theorem \ref{mainresult}.

\section{Proof of Theorem \ref{result-1}}\label{sec2}

\begin{lemma}\label{lem-positive}
If $u$ is a nonnegative solution to (\ref{Elliptic}), then either $u>0$ or $u\equiv0$.
\end{lemma}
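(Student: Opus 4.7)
The plan is to establish a discrete strong minimum principle at any zero of $u$ and then propagate the vanishing along the graph by connectedness. Concretely, I would assume $u(x_0)=0$ at some $x_0\in V$ and derive $u\equiv 0$.

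First I would evaluate (\ref{Elliptic}) at the point $x_0$. Since $u(x_0)^{\sigma}=0$, the inequality gives $\Delta u(x_0)\leq 0$. On the other hand, directly from the definition of the graph Laplacian,
\[
\Delta u(x_0)=\frac{1}{\mu(x_0)}\sum_{y\sim x_0}\mu_{x_0 y}\bigl(u(y)-u(x_0)\bigr)=\frac{1}{\mu(x_0)}\sum_{y\sim x_0}\mu_{x_0 y}u(y)\geq 0,
\]
because $\mu_{x_0 y}>0$ for every $y\sim x_0$ and $u\geq 0$. Combining the two bounds forces $\Delta u(x_0)=0$, and then each nonnegative summand in the above display must vanish, so $u(y)=0$ for every neighbor $y$ of $x_0$.

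Next I would iterate: the same argument applied at each neighbor of $x_0$ shows that $u$ vanishes on the $2$-neighborhood of $x_0$, and by induction on the graph distance to $x_0$, together with connectedness of $(V,\mu)$ (so every $x\in V$ can be joined to $x_0$ by a finite path), one concludes $u\equiv 0$ on $V$.

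No serious obstacle is anticipated; the lemma is essentially a consequence of the pointwise form of $\Delta$ on graphs---every neighbor of $x$ contributes with a strictly positive weight $\mu_{xy}$---combined with connectedness of $V$. In particular, condition \textbf{($p_0$)} is not needed here, and the argument uses only $u\geq 0$ together with the sign of the nonlinearity, not its superlinear growth.
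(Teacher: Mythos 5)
Your argument is correct and is essentially identical to the paper's proof: both evaluate the inequality at a zero $x_0$, use $u(x_0)^\sigma=u(x_0)=0$ to conclude $\sum_{y\sim x_0}\mu_{x_0y}u(y)\leq 0$, deduce that all neighbors vanish since each weight is positive and $u\geq 0$, and propagate by connectedness. No gap.
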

\begin{proof}
Assume that there exists some point $x_0$ such that $u(x_0)=0$, now we show that $u\equiv0$. Since from (\ref{Elliptic})
\begin{equation*}
0\geq\sum_{y\sim x_0}\frac{\mu_{x_0y}}{\mu(x_0)}u(y)-u(x_0)+u(x_0)^{\sigma}=\sum_{y\sim x_0}\frac{\mu_{x_0y}}{\mu(x_0)}u(y),
\end{equation*}
which implies that $u(y)=0$ for any $y\sim x$. Hence $u\equiv0$ by the connectedness of $V$.
\end{proof}

% The following proposition implies that in our settings, the only interesting situation for nonnegative solutions to \eqref{Elliptic} is $\sigma>1$.
%\begin{proposition}
%For $\sigma\in(0,1]$, if $u$ is a nonnegative solution to \eqref{Elliptic}, then $u\equiv0$.
%\end{proposition}
\begin{proof}[Proof of Theorem \ref{result-1}]
{\bf Case of $\sigma=1$.} In this case, inequality \eqref{Elliptic} is just
\begin{equation*}
\sum_{y\sim x}\frac{\mu_{xy}}{\mu(x)}u(y)-u(x)+u(x)=\sum_{y\sim x}\frac{\mu_{xy}}{\mu(x)}u(y)\leq0,
\end{equation*}
which implies $u(y)=0$ for any $y\sim x$. This implies $u\equiv0$ by the connectedness of $V$.

\bigskip

{\bf Case of $0<\sigma<1$.} We first show that if $u\not\equiv0$, then $u(x)\geq1$ for any $x\in V$.
By Lemma \ref{lem-positive}, we obtain that $u>0$ in $V$, by \eqref{Elliptic}, we have
\begin{equation*}
0\geq\sum_{y\sim x}\frac{\mu_{xy}}{\mu(x)}u(y)-u(x)+u(x)^{\sigma}\geq -u(x)+u(x)^{\sigma},
\end{equation*}
which implies $u(x)\geq1$ by using that $0<\sigma<1$.

Now fix any $x_0\in V$. Let $x_1$ be a neighboring vertex of $x_0$ such that $u(x_1)=\min_{y\sim x_0}u(y)$. Then by \eqref{Elliptic},
\begin{equation*}
u(x_0)-u(x_0)^{\sigma}\geq\sum_{y\sim x_0}\frac{\mu_{x_0y}}{\mu(x_0)}u(y)\geq\sum_{y\sim x_0}\frac{\mu_{x_0y}}{\mu(x_0)}u(x_1)=u(x_1),
\end{equation*}
that is
\begin{equation*}
u(x_1)\leq u(x_0)\left(1-u(x_0)^{\sigma-1}\right).
\end{equation*}
Inductively, for $x_i$, we can find $x_{i+1}\in V$ such that $u(x_{i+1})=\min_{y\sim x_i}u(y)$ and
\begin{equation}\label{eqinductive1}
u(x_{i+1})\leq u(x_i)\left(1-u(x_i)^{\sigma-1}\right)\qquad i\geq1.
\end{equation}
Since $u(x_{i})\leq u(x_{i-1})\leq\cdots\leq u(x_0)$, we have
\begin{equation}\label{eqinductive2}
1-u(x_{i})^{\sigma-1}\leq 1-u(x_0)^{\sigma-1}\qquad i\geq1.
\end{equation}
Multiplying both sides of \eqref{eqinductive1} over $i=0,1,\cdots,n-1$, then using \eqref{eqinductive2}, we obtain
\begin{equation}\label{eqinductive3}
u(x_n)\leq u(x_0)\prod_{i=0}^{n-1}\left(1-u(x_{i})^{\sigma-1}\right)\leq u(x_0)\left(1-u(x_0)^{\sigma-1}\right)^{n}.
\end{equation}
Since $1-u(x_0)^{\sigma-1}<1$, we see from \eqref{eqinductive3} that $u(x_n)\rightarrow0$ as $n\rightarrow0$, this contradicts the fact that
$u(x)\geq1$ for all $x\in V$. Therefore the only nonnegative solution to \eqref{Elliptic} is identically zero.
\end{proof}

\section{Proof of Theorem \ref{mainresult}}\label{sec3}

%In the following, let $\sigma>1$. We assume the following property $(p_0)$ for $(V,\mu)$: there exists a constant $p_0>1$ such that
%\begin{equation}\label{p0}
%\frac{\mu_{xy}}{\mu(x)}\geq \frac{1}{p_0} \qquad\text{ for any $x\sim y$}.
%\end{equation}
%This property is clearly satisfied for a graph equipped with the standard weight whose vertices have uniformly bounded degree.

Before proceeding to the proof of Theorem \ref{mainresult}, we first give a simple property for nonnegative solutions to \eqref{Elliptic}
under condition \textbf{($p_0$)}.
\begin{lemma}\label{th2.1}
Let $\sigma>1$ and $(V,\mu)$ satisfy $(p_0)$. If $u\geq0$ is a solution to \eqref{Elliptic}, then either $u\equiv0$ or $0<u<1$ on $V$ and
\begin{equation}\label{compare}
\frac{1}{p_0}\leq\frac{u(x)}{u(y)}\leq p_0 \qquad \text{ for any $x\sim y$}.
\end{equation}
\end{lemma}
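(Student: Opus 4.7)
The plan is to prove the three conclusions in order: first reduce to the case $u>0$ everywhere, next establish the upper bound $u<1$, and finally derive the two-sided ratio bound from condition \textbf{($p_0$)} together with superharmonicity.

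First I would apply Lemma \ref{lem-positive} directly: since $u\geq 0$ solves \eqref{Elliptic}, either $u\equiv 0$ (and we are done) or $u>0$ on all of $V$. Assume the latter case from now on. Rewriting \eqref{Elliptic} pointwise at any $x\in V$ gives
\begin{equation*}
\sum_{y\sim x}\frac{\mu_{xy}}{\mu(x)}u(y)\ \leq\ u(x)-u(x)^{\sigma}.
\end{equation*}
The left-hand side is strictly positive, because $u>0$ and $\mu_{xy}>0$ whenever $y\sim x$. This basic inequality will drive both remaining claims.

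To show $u<1$, suppose for contradiction that $u(x)\geq 1$ at some $x$. Since $\sigma>1$, we have $u(x)^{\sigma}\geq u(x)$, so the right-hand side above is $\leq 0$, contradicting strict positivity of the left-hand side. Hence $u(x)<1$ for every $x\in V$.

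For the ratio bound, I would drop the nonnegative term $u(x)^{\sigma}$ on the right and use only the superharmonicity consequence $\sum_{y\sim x}\tfrac{\mu_{xy}}{\mu(x)}u(y)\leq u(x)$. Fixing a single neighbor $y\sim x$ and discarding the other (positive) summands yields
\begin{equation*}
\frac{\mu_{xy}}{\mu(x)}\,u(y)\ \leq\ u(x).
\end{equation*}
Condition \textbf{($p_0$)} gives $\mu_{xy}/\mu(x)\geq 1/p_0$, so $u(y)\leq p_0\, u(x)$. Swapping the roles of $x$ and $y$ (using the corresponding inequality centered at $y$) gives $u(x)\leq p_0\, u(y)$, and combining these yields \eqref{compare}.

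There is no serious obstacle here; the only point requiring any care is the strict positivity argument (which relies on Lemma \ref{lem-positive} and on $\mu_{xy}>0$ characterizing edges) and the observation that $\sigma>1$ is used only to guarantee $u(x)^{\sigma}\geq u(x)$ once $u(x)\geq 1$. The ratio bound itself does not use the superlinear term at all; it uses only that $u$ is superharmonic and that transition probabilities are uniformly bounded below by $1/p_0$.
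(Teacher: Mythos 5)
Your proof is correct and follows essentially the same route as the paper's: invoke Lemma \ref{lem-positive} to get $u>0$, deduce $u<1$ from $0\geq \Delta u(x)+u(x)^{\sigma}>u(x)^{\sigma}-u(x)$, and obtain \eqref{compare} by keeping a single neighbor term and applying condition \textbf{($p_0$)}, then symmetrizing. No gaps.
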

\begin{proof}
Assume $u\not\equiv0$, then by Lemma \ref{lem-positive}, we must have $u>0$ on $V$. Then by \eqref{Elliptic}, we have
\begin{equation*}
0\geq\frac{1}{\mu(x)}\sum_{y\sim x}\mu_{xy}(u(y)-u(x))+u(x)^\sigma>u(x)^\sigma-u(x),
\end{equation*}
which implies $u(x)<1$. Also we have
\begin{equation*}
0\geq\frac{1}{\mu(x)}\sum_{y\sim x}\mu_{xy}(u(y)-u(x))+u(x)^\sigma\geq\frac{\mu_{xy}}{\mu(x)}u(y)-u(x) \qquad \text{ for any $y\sim x$},
\end{equation*}
which implies
\begin{equation*}
\frac{u(x)}{u(y)}\geq\frac{\mu_{xy}}{\mu(x)}\geq \frac{1}{p_0},
\end{equation*}
by the property {\bf($p_0$)}. Exchanging $x$ and $y$, we get the other side inequality and hence \eqref{compare} holds.
\end{proof}

In the following, for brevity, we denote by $\nabla_{xy}f=f(y)-f(x)$ the discrete gradient of a function $f\in\ell (V)$ along a directed edge $x\rightarrow y$.

\begin{proof}[Proof of Theorem \ref{mainresult}] Assume $u>0$ is a solution to \eqref{Elliptic}. We will pick a sequence of test functions $\varphi_n\in \ell(V)$ which are compactly supported in $V$ and satisfy $0\leq\varphi_n\leq1$, then we obtain an estimate of the $L^\sigma$-norm of $u$ in terms of $\varphi_n$, and finally get $u\equiv0$, a contradiction. We separate the proof into the following two steps.

\bigskip

Step 1.
Let $s>2$ be a sufficiently large fixed number and $t\in(0,1)$ be sufficiently small. Let $\varphi\in\ell(V)$ be compactly supported in $V$, define $\psi=\varphi^su^{-t}$.
 From \eqref{Elliptic}, we have for any fixed $x\in V$,
 \begin{eqnarray*}
\sum_{y\in V}\mu_{xy}(\nabla_{xy}u)\psi(x)+\mu(x)u(x)^{\sigma}\psi(x)\leq0.
 \end{eqnarray*}
 Multiplying $\psi$ to the above and summing up over $x\in V$, we get
\begin{equation}\label{eq1}
\sum_{x,y\in V}\mu_{xy}(\nabla_{xy}u)\psi(x)+\sum_{x\in V}\mu(x)u(x)^{\sigma}\psi(x)\leq0.
\end{equation}
Then noticing
\begin{equation*}
\sum_{x,y\in V}\mu_{xy}(\nabla_{xy}u)\psi(x)=-\frac{1}{2}\sum_{x,y\in V}\mu_{xy}(\nabla_{xy}u)(\nabla_{xy}\psi),
\end{equation*}
and
\begin{equation*}
\nabla_{xy}\psi=\nabla_{xy}(\varphi^su^{-t})=u(y)^{-t}\nabla_{xy}(\varphi^s)+\varphi(x)^s\nabla_{xy}(u^{-t}),
\end{equation*}
we obtain
\begin{align*}
&\sum_{x,y\in V}\mu_{xy}(\nabla_{xy}u)\psi(x)\\
&=-\frac{1}{2}\sum_{x,y\in V}\mu_{xy}u(y)^{-t}(\nabla_{xy}u)\nabla_{xy}(\varphi^{s})-\frac{1}{2}\sum_{x,y\in V}\mu_{xy}\varphi(x)^{s}(\nabla_{xy}u)\nabla_{xy}(u^{-t}),
\end{align*}
Substituting this back into \eqref{eq1}, we have
\begin{align}\label{eq2}
-\frac{1}{2}\sum_{x,y\in V}\mu_{xy}\varphi(x)^{s}(\nabla_{xy}u)\nabla_{xy}(u^{-t})&+\sum_{x\in V}\mu(x)\varphi(x)^{s}u(x)^{\sigma-t}\notag\\
&\leq\frac{1}{2}\sum_{x,y\in V}\mu_{xy}u(y)^{-t}(\nabla_{xy}u)\nabla_{xy}(\varphi^{s}).
\end{align}

For the first term in the LHS of \eqref{eq2}, observe that by the mid-value theorem, there is $\xi$ between $u(y)$ and $u(x)$ such that
\begin{equation*}
\nabla_{xy}(u^{-t})=u(y)^{-t}-u(x)^{-t}=-t\xi^{-t-1}(u(y)-u(x))=-t\xi^{-t-1}\nabla_{xy}u.
\end{equation*}
Since $\xi$ is between two positive values $u(y)$ and $u(x)$, and also by \eqref{compare}, we must have $\frac{u(x)}{p_0}\leq \xi\leq u(x)p_0$.
Hence we have
\begin{align}
&-\frac{1}{2}\sum_{x,y\in V}\mu_{xy}\varphi(x)^{s}(\nabla_{xy}u)\nabla_{xy}(u^{-t})\notag\\
=&\frac{t}{2}\sum_{x,y\in V}\mu_{xy}\varphi(x)^{s}(\nabla_{xy}u)^2\xi^{-t-1}\notag\\
\geq&\frac{t}{2p_0^{t+1}}\sum_{x,y\in V}\mu_{xy}\varphi(x)^{s}u(x)^{-t-1}(\nabla_{xy}u)^2\notag\\
\geq&\frac{t}{2p_0^{2}}\sum_{x,y\in V}\mu_{xy}\varphi(x)^{s}u(x)^{-t-1}(\nabla_{xy}u)^2,\label{eq2'}
\end{align}
where we have used that $0<t<1$ and $p_0>1$.

For the RHS of \eqref{eq2}, similarly we have that there is some $\eta$ between the two nonnegative values $\varphi(x)$ and $\varphi(y)$ such that
\begin{equation}\label{eq3'}
\nabla_{xy}(\varphi^{s})=s\eta^{s-1}(\varphi(y)-\varphi(x))=s\eta^{s-1}\nabla_{xy}\varphi.
\end{equation}
By using Cauchy-Schwarz inequality, we obtain
\begin{align}
&\frac{1}{2}\sum_{x,y\in V}\mu_{xy}u(y)^{-t}(\nabla_{xy}u)\nabla_{xy}(\varphi^{s})\notag\\
=&\frac{s}{2}\sum_{x,y\in V}\mu_{xy}\left(u(y)^{-\frac{t+1}2}\eta^{\frac s2}\nabla_{xy}u\right)\left(u(y)^{-\frac{t-1}{2}}\eta^{\frac{s-2}{2}}\nabla_{xy}\varphi\right)\notag\\
\leq&\frac{\epsilon}{4}\sum_{x,y\in V}\mu_{xy}u(y)^{-t-1}\eta^{s}(\nabla_{xy}u)^2+\frac{s^2}{\epsilon}\sum_{x,y\in V}\mu_{xy}u(y)^{-t+1}\eta^{s-2}(\nabla_{xy}\varphi)^2,\label{eq3}
\end{align}
where $\epsilon>0$ will be determined later.

Combining \eqref{eq2} with \eqref{eq2'} and \eqref{eq3}, we arrive at
\begin{align}\label{eq3-0}
&\frac{t}{2p_0^{2}}\sum_{x,y\in V}\mu_{xy}\varphi(x)^{s}u(x)^{-t-1}(\nabla_{xy}u)^2+\sum_{x\in V}\mu(x)\varphi(x)^{s}u(x)^{\sigma-t}\notag\\
\leq&\frac{\epsilon}{4}\sum_{x,y\in V}\mu_{xy}u(y)^{-t-1}\eta^{s}(\nabla_{xy}u)^2+\frac{s^2}{\epsilon}\sum_{x,y\in V}\mu_{xy}u(y)^{-t+1}\eta^{s-2}(\nabla_{xy}\varphi)^2.
\end{align}

We first deal with the first term in the RHS of \eqref{eq3-0}. Observe that $\eta^s\leq (\max\{\varphi(x),\varphi(y)\})^s\leq\varphi(x)^s+\varphi(y)^s$ and by applying Lemma \ref{compare}, we have
\begin{align}
\sum_{x,y\in V}\mu_{xy}u(y)^{-t-1}\eta^{s}(\nabla_{xy}u)^2&\leq \sum_{x,y\in V}\mu_{xy}u(y)^{-t-1}(\varphi(x)^s+\varphi(y)^s)(\nabla_{xy}u)^2\notag\\
&\leq (1+p_0^{1+t})\sum_{x,y\in V}\mu_{xy}u(x)^{-t-1}\varphi(x)^s(\nabla_{xy}u)^2\notag\\
&\leq (1+p_0^{2})\sum_{x,y\in V}\mu_{xy}u(x)^{-t-1}\varphi(x)^s(\nabla_{xy}u)^2,\label{eq4}
\end{align}
where in the last step we used $0<t<1$ and $p_0>1$.

For the second term in the RHS of \eqref{eq3-0}, since $s>2$, similar to the above, we have
\begin{equation}
\sum_{x,y\in V}\mu_{xy}u(y)^{-t+1}\eta^{s-2}(\nabla_{xy}\varphi)^2\leq (1+p_0)\sum_{x,y\in V}\mu_{xy}u(x)^{-t+1}\varphi(x)^{s-2}(\nabla_{xy}\varphi)^2.\label{eq5}
\end{equation}
Letting $\epsilon=\frac{t}{p_0^2(1+p_0^2)}$, substituting \eqref{eq4} and \eqref{eq5} into \eqref{eq3-0}, we obtain
%\begin{align}
%\frac{1}{2}\sum_{y\sim x}\mu_{xy}u(y)^{-t}(\nabla_{xy}u)\nabla_{xy}(\varphi^{s})\leq&\frac{t}{4p_0^2}\sum_{y\sim x}\mu_{xy}u(x)^{-t-1}\varphi(x)^s(\nabla_{xy}u)^2\notag\\
%+&\frac{C(p_0)s^2}{t}\sum_{y\sim x}\mu_{xy}u(x)^{-t+1}\varphi(x)^{s-2}(\nabla_{xy}\varphi)^2,\label{eq6}
%\end{align}
%where $C(p_0) =p_0^2(1+p_0)(1+p_0)^2$.
%
%Now we substitute \eqref{eq2'} and \eqref{eq6} into \eqref{eq2} to obtain
\begin{align}
&\frac{t}{4p_0^2}\sum_{x,y\in V}\mu_{xy}u(x)^{-t-1}\varphi(x)^s(\nabla_{xy}u)^2+\sum_{x\in V}\mu(x)\varphi(x)^{s}u(x)^{\sigma-t}\notag\\
&\leq\frac{C(p_0)s^2}{t}\sum_{x,y\in V}\mu_{xy}u(x)^{-t+1}\varphi(x)^{s-2}(\nabla_{xy}\varphi)^2,\label{eq7}
\end{align}
where $C(p_0)=p_0^2(1+p_0)(1+p_0^2)$.

By using Young's inequality with H\"{o}lder conjugates $p_1=\frac{\sigma-t}{1-t}$ and $p_2=\frac{\sigma-t}{\sigma-1}$, we estimate the RHS of \eqref{eq7} as follows.
\begin{align}
&\frac{C(p_0)s^2}{t}\sum_{x,y\in V}\mu_{xy}u(x)^{-t+1}\varphi(x)^{s-2}(\nabla_{xy}\varphi)^2\notag\\
=\ &\sum_{x,y\in V}\mu_{xy}\left(u(x)^{-t+1}\varphi(x)^{\frac{s}{p_1}}\right)\left(\frac{C(p_0)s^2}{t}\varphi(x)^{\frac{s}{p_2}-2}(\nabla_{xy}\varphi)^2\right)\notag\\
\leq\ &\varepsilon\sum_{x,y\in V}\mu_{xy}\left(u(x)^{-t+1}\varphi(x)^{\frac{s}{p_1}}\right)^{p_1}+C_{\varepsilon}\sum_{x,y\in V}\mu_{xy}\left(\frac{C(p_0)s^2}{t}\varphi(x)^{\frac{s}{p_2}-2}(\nabla_{xy}\varphi)^2\right)^{p_2}\notag\\
{\color{magenta}=}\ &\frac12\sum_{x,y\in V}\mu_{xy}u(x)^{\sigma-t}\varphi(x)^{s}+\frac C{t^{\frac{\sigma-t}{\sigma-1}}}\sum_{x,y\in V}\mu_{xy}\varphi(x)^{{s}-\frac{2(\sigma-t)}{\sigma-1}}(\nabla_{xy}\varphi)^{\frac{2(\sigma-t)}{\sigma-1}},\label{eq8}
\end{align}
where we have taken $\varepsilon=\frac12$ and used that $0\leq\varphi\leq1$.
The constants on $p_0$, $\sigma$ and $s$ are absorbed into $C$.

Substituting \eqref{eq8} into \eqref{eq7}, we obtain
\begin{align}
\frac{t}{4p_0^2}\sum_{x,y\in V}\mu_{xy}u(x)^{-t-1}\varphi(x)^s(\nabla_{xy}u)^2&+\frac12\sum_{x\in V}\mu(x)\varphi(x)^{s}u(x)^{\sigma-t}\notag\\
&\leq C{t^{\frac{\sigma-t}{1-\sigma}}}\sum_{x,y\in V}\mu_{xy}\varphi(x)^{{s}-\frac{2(\sigma-t)}{\sigma-1}}(\nabla_{xy}\varphi)^{\frac{2(\sigma-t)}{\sigma-1}}.\label{eq9}
\end{align}

We then turn back to \eqref{eq1}, take another $\psi=\varphi^s$, and get that
\begin{equation}\label{eq10}
-\frac12\sum_{x,y\in V}\mu_{xy}(\nabla_{xy}u)\nabla_{xy} (\varphi^s)+\sum_{x\in V}\mu(x)u(x)^{\sigma}\varphi(x)^s\leq0.
\end{equation}
By using \eqref{eq3'}, and from \eqref{eq10}, we obtain
\begin{align}
&\sum_{x\in V}\mu(x)u(x)^{\sigma}\varphi(x)^s\notag\\
\leq &\frac s2\sum_{x,y\in V}\mu_{xy}\eta^{s-1} (\nabla_{xy}u)(\nabla_{xy}\varphi)\notag\\
\leq&\frac s2\left(\sum_{x,y\in V}\mu_{xy}\eta^su(x)^{-t-1}(\nabla_{xy} u)^2\right)^{\frac12}\left(\sum_{x,y\in V}\mu_{xy}\eta^{s-2}u(x)^{t+1}(\nabla_{xy}\varphi)^2\right)^{\frac12},\label{eq11}
\end{align}
where the last inequality is by Cauchy-Schwarz.

Now similar to \eqref{eq4}, we have
\begin{equation}\label{eq12}
\sum_{x,y\in V}\mu_{xy}\eta^{s-2}u(x)^{t+1}(\nabla_{xy}\varphi)^2\leq (1+p_0^{2})\sum_{x,y\in V}\mu_{xy}u(x)^{t+1}\varphi(x)^{s-2}(\nabla_{xy}\varphi)^2.
\end{equation}
Substituting \eqref{eq4} and \eqref{eq12} into \eqref{eq11}, we see that
\begin{align}\label{eq13}
&\sum_{x\in V}\mu(x)u(x)^{\sigma}\varphi(x)^s\notag\\
\leq &\frac {s(1+p_0^2)}2\left(\sum_{x,y\in V}\mu_{xy}u(x)^{-t-1}\varphi(x)^s(\nabla_{xy}u)^2\right)^{\frac12}\left(\sum_{x,y\in V}\mu_{xy}u(x)^{t+1}\varphi(x)^{s-2}(\nabla_{xy}\varphi)^2\right)^{\frac12}.
\end{align}
Now we use H\"{o}lder's inequality with conjugates $p_3=\frac{\sigma}{t+1}$ and $p_4=\frac{\sigma}{\sigma-t-1}$ to obtain
\begin{align}\label{eq14}
&\sum_{x,y\in V}\mu_{xy}u(x)^{t+1}\varphi(x)^{s-2}(\nabla_{xy}\varphi)^2\notag\\
=&\sum_{x,y\in V,\nabla_{xy}\varphi\neq0}\mu_{xy}u(x)^{t+1}\varphi(x)^{s-2}(\nabla_{xy}\varphi)^2\notag\\
=&\sum_{x,y\in V,\nabla_{xy}\varphi\neq0}\mu_{xy}\left(\varphi(x)^{\frac{s}{p_3}}u(x)^{t+1}\right)\left(\varphi(x)^{\frac{s}{p_4}-2}(\nabla_{xy}\varphi)^2\right)\notag\\
\leq&\left(\sum_{x,y\in V,\nabla_{xy}\varphi\neq0}\mu_{xy}\varphi(x)^{{s}}u(x)^{\sigma}\right)^{\frac{t+1}{\sigma}}\left(\sum_{x,y\in V,\nabla_{xy}\varphi\neq0}\mu_{xy}\varphi(x)^{{s}-\frac{2\sigma}{\sigma-t-1}}
(\nabla_{xy}\varphi)^{\frac{2\sigma}{\sigma-t-1}}\right)^{\frac{\sigma-t-1}{\sigma}}\notag\\
\leq&\left(\sum_{x,y\in V,\nabla_{xy}\varphi\neq0}\mu_{xy}\varphi(x)^{{s}}u(x)^{\sigma}\right)^{\frac{t+1}{\sigma}}\left(\sum_{x,y\in V,\nabla_{xy}\varphi\neq0}\mu_{xy}(\nabla_{xy}\varphi)^{\frac{2\sigma}{\sigma-t-1}}\right)^{\frac{\sigma-t-1}{\sigma}},
\end{align}
where we have used $0\leq\varphi\leq1$ and ${s}>\frac{2\sigma}{\sigma-t-1}$.

By \eqref{eq9}, we have that
\begin{align}\label{eq15}
&\sum_{x,y\in V}\mu_{xy}u(x)^{-t-1}\varphi(x)^s(\nabla_{xy}u)^2\notag\\
\leq& C{t^{\frac{\sigma}{1-\sigma}-1}}\sum_{x,y\in V}\mu_{xy}\varphi(x)^{{s}-\frac{2(\sigma-t)}{\sigma-1}}(\nabla_{xy}\varphi)^{\frac{2(\sigma-t)}{\sigma-1}}\notag\\
\leq&C{t^{\frac{\sigma}{1-\sigma}-1}}\sum_{x,y\in V}\mu_{xy}(\nabla_{xy}\varphi)^{\frac{2(\sigma-t)}{\sigma-1}}.
\end{align}
Substituting \eqref{eq14} and \eqref{eq15} into \eqref{eq13}, we obtain
\begin{align}\label{eq16}
\sum_{x\in V}\mu(x)u(x)^{\sigma}\varphi(x)^s\leq &C{t^{\frac{2\sigma-1}{2(1-\sigma)}}}\left(\sum_{x,y\in V,\nabla_{xy}\varphi\neq0}\mu_{xy}\varphi(x)^{{s}}u(x)^{\sigma}\right)^{\frac{t+1}{2\sigma}}\notag\\
&\times\left(\sum_{x,y\in V}\mu_{xy}(\nabla_{xy}\varphi)^{\frac{2(\sigma-t)}{\sigma-1}}\right)^{\frac12}\left(\sum_{x,y\in V,\nabla_{xy}\varphi\neq0}\mu_{xy}(\nabla_{xy}\varphi)^{\frac{2\sigma}{\sigma-t-1}}\right)^{\frac{\sigma-t-1}{2\sigma}},
\end{align}
which implies that
\begin{align}\label{eq17}
&\left(\sum_{x\in V}\mu(x)u(x)^{\sigma}\varphi(x)^s\right)^{1-\frac{t+1}{2\sigma}}\notag\\
\leq& C{t^{\frac{2\sigma-1}{2(1-\sigma)}}}\left(\sum_{x,y\in V}\mu_{xy}(\nabla_{xy}\varphi)^{\frac{2(\sigma-t)}{\sigma-1}}\right)^{\frac12}\left(\sum_{x,y\in V}\mu_{xy}(\nabla_{xy}\varphi)^{\frac{2\sigma}{\sigma-t-1}}\right)^{\frac{\sigma-t-1}{2\sigma}}.
\end{align}

\bigskip

Step 2. We now estimate the $L^\sigma$-norm of $u$ via \eqref{eq17} by choosing a suitable sequence of $\varphi_i$'s.

Let $o\in V$ be as in \eqref{volume} and denote $d(x)=d(o,x)$ for brevity. For any integer $k\geq0$, we denote $B_k=\{x\in V:\ d(x)\leq 2^k\}$ and define the function $h_k$ on $V$ by
\begin{equation*}\label{hk}
h_k=\begin{cases}
1 & d(x)\leq 2^k,\\
2-\frac{d(x)}{2^k} & 2^k< d(x)<2^{k+1},\\
0 & d(x)\geq 2^{k+1}.
\end{cases}
\end{equation*}
For $i\geq1$, define
\begin{equation*}\label{phik}
\varphi_i(x)=\frac{1}{i}\sum_{k=i-1}^{2i-2}h_k(x).
\end{equation*}
Then it is clear that $0\leq\varphi_i\leq1$ and $\varphi_i=0$ in $\left(B_{2i-1}\right)^c$, $\varphi_i=1$ in $B_{i-1}$. Hence $\varphi_i\rightarrow1$, the identity function on $V$ as $i\rightarrow\infty$. Moreover, for any $x\in B_k-B_{k-1}$, if $k\leq i-2$ or $k\geq 2i+1$, then  $\nabla_{xy}\varphi_i=0$ for any $y\sim x$; if $i-1\leq k\leq 2i$, then for any $y\sim x$, we have
\begin{equation*}\label{gradphik}
|\nabla_{xy}\varphi_i|\lesssim \frac{1}{i\cdot2^k}.
\end{equation*}
We replace $\varphi$ in \eqref{eq17} by $\varphi_i$, and obtain
\begin{align}
&\sum_{x,y\in V}\mu_{xy}(\nabla_{xy}\varphi_i)^{\frac{2(\sigma-t)}{\sigma-1}}\notag\\
\leq&\sum_{k={i-1}}^{2i}\sum_{x\in B_k-B_{k-1}}\sum_{y\sim x} \mu_{xy}(\nabla_{xy}\varphi_i)^{\frac{2(\sigma-t)}{\sigma-1}}\notag\\
\leq& C\sum_{k={i-1}}^{2i}\sum_{x\in B_k-B_{k-1}}\mu(x)\left(\frac{1}{i\cdot2^k}\right)^{\frac{2(\sigma-t)}{\sigma-1}}\notag\\
\leq& C\sum_{k={i-1}}^{2i}\mu(B_k)\left(\frac{1}{i\cdot2^k}\right)^{\frac{2(\sigma-t)}{\sigma-1}}\notag\\
\leq& C\sum_{k={i-1}}^{2i}\left(2^k\right)^{\frac{2\sigma}{\sigma-1}}\left(\ln 2^k\right)^{\frac{1}{\sigma-1}}\left(\frac{1}{i\cdot2^k}\right)^{\frac{2(\sigma-t)}{\sigma-1}}\notag\\
\leq& C\sum_{k={i-1}}^{2i}2^{\frac{2kt}{\sigma-1}}i^{-\frac{2(\sigma-t)}{\sigma-1}}k^{\frac{1}{\sigma-1}}\notag\\
\leq & C'i^{\frac{2t-\sigma}{\sigma-1}}2^{\frac{4it}{\sigma-1}}.\label{eq18}
\end{align}
Similarly, we have
\begin{align}
&\sum_{x,y\in V}\mu_{xy}(\nabla_{xy}\varphi_i)^{\frac{2\sigma}{\sigma-t-1}}\notag\\
\leq&C\sum_{k={i-1}}^{2i}\left(2^k\right)^{\frac{2\sigma}{\sigma-1}}\left(\ln 2^k\right)^{\frac{1}{\sigma-1}}\left(\frac{1}{i\cdot2^k}\right)^{\frac{2\sigma}{\sigma-t-1}}\notag\\
\leq& C\sum_{k={i-1}}^{2i}2^{k(\frac{2\sigma}{\sigma-1}-\frac{2\sigma}{\sigma-t-1})}i^{-\frac{2\sigma}{\sigma-t-1}}k^{\frac{1}{\sigma-1}}\notag\\
\leq & C'i^{\frac{\sigma}{\sigma-1}-\frac{2\sigma}{\sigma-t-1}}2^{i(\frac{2\sigma}{\sigma-1}-\frac{2\sigma}{\sigma-t-1})}.\label{eq19}
\end{align}
Substituting \eqref{eq18} and \eqref{eq19} into \eqref{eq17}, we obtain
\begin{align}
&\left(\sum_{x\in V}\mu(x)u(x)^{\sigma}\varphi_i(x)^s\right)^{1-\frac{t+1}{2\sigma}}\notag\\
\leq& C t^{\frac{2\sigma-1}{2(1-\sigma)}}\left(i^{\frac{2t-\sigma}{\sigma-1}}2^{\frac{4it}{\sigma-1}}\right)^{\frac12}
\left(i^{\frac{\sigma}{\sigma-1}-\frac{2\sigma}{\sigma-t-1}}2^{i(\frac{2\sigma}{\sigma-1}-\frac{2\sigma}{\sigma-t-1})}\right)^{\frac{\sigma-t-1}{2\sigma}}\notag\\
\leq& C\left(i^{\frac{1}{i}}\right)^{\frac{1}{2(\sigma-1)}},\label{eq20}
\end{align}
in which, we set $t=\frac{1}{i}$.
Note that the RHS of \eqref{eq20} is uniformly bounded in $i$, we obtain by letting $i\rightarrow\infty$ that
\begin{equation*}\label{eq21}
\sum_{x\in V}\mu(x)u(x)^{\sigma}<\infty.
\end{equation*}
This implies that as $i\rightarrow\infty$,
\begin{equation*}\label{eq22}
\sum_{x,y\in V,\nabla_{xy}\varphi_i\neq0}\mu_{xy}\varphi_i(x)^su(x)^{\sigma}\rightarrow0.
\end{equation*}
Substituting the above $\varphi_i$ and $t=1/i$ back into \eqref{eq16} and repeating the same procedure as above, we see that as $i\rightarrow\infty$,
\begin{equation*}\label{eq23}
\sum_{x\in V}\mu(x)u(x)^{\sigma}\varphi_i(x)^s\rightarrow0,
\end{equation*}
which implies
\begin{equation*}\label{eq23}
\sum_{x\in V}\mu(x)u(x)^{\sigma}=0,
\end{equation*}
and hence $u\equiv0$, which is a contradiction to the assumption that $u>0$. Therefore, there exists no positive solution to equation \eqref{Elliptic}.
\end{proof}

\bigskip

\section{An example}
In this section, we provide an example of weighted graph to see that the indices $\frac{2\sigma}{\sigma-1}$ and $\frac{1}{\sigma-1}$ in \eqref{volume} are sharp.

Let $N\geq2$, we introduce the notion of homogeneous tree $\mathbb T_N$. We say that a connected graph $(V,E)$ is a tree if for any two distinct points $x,y\in V$, there is only one path between $x$ and $y$. A homogeneous tree $\mathbb T_N$ is defined to be a tree where all vertices have degree $N$. The following example is on $\mathbb T_N$. Fix an arbitrary vertex $o\in \mathbb T_N$ as the root. For $n\geq0$, we denote by $D_n:\ =\{x\in \mathbb T_N:\ d(o,x)=n\}$ the collection of all the vertices with distance $n$ from $o$, and denote by $E_n$ the collection of all the edges from vertices in $D_n$ to vertices in $D_{n+1}$.
\begin{theorem}
Let $(V,E)=\mathbb T_N$. For any arbitrary small $\varepsilon>0$, there exists a weight $\mu$ on $\mathbb T_N$ such that $\mu(B(o,n))\asymp n^{\frac{2\sigma}{\sigma-1}}(\ln n)^{\frac{1}{\sigma-1}+\varepsilon}$ for $n\geq2$ and for such $(\mathbb T_N,\mu)$, there exists a solution $u$ to \eqref{Elliptic}. Moreover, we can take $\mu$ and $u$ as following:

\begin{eqnarray*}
\mu_{xy}&=\mu_{n}&=\frac{(n+n_0)^{\frac{\sigma+1}{\sigma-1}}(\ln (n+n_0))^{\frac{1}{\sigma-1}+\varepsilon}}{(N-1)^n}\qquad  \text{for any $(x,y)\in E_n,\ n\geq0$},\\
u(x)&=u_n&= \frac{\delta}{(n+n_0)^{\frac{2}{\sigma-1}}(\ln (n+n_0))^{\frac{1}{\sigma-1}}}\qquad\qquad\text{for any $x\in D_n,\ n\geq0$},
\end{eqnarray*}
where $n_0\geq2$ is sufficiently large and $\delta>0$ is sufficiently small.
\end{theorem}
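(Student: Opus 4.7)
The plan is to directly verify the two assertions for the explicit $(\mu,u)$ supplied in the statement. Since $\mathbb{T}_N$ is homogeneous and both $\mu$ and $u$ are radial around $o$, everything reduces to one-variable sequence calculations. Write $r = n+n_0$ and set $a_n \defeq (N-1)^n\mu_n = r^{\frac{\sigma+1}{\sigma-1}}(\ln r)^{\frac{1}{\sigma-1}+\varepsilon}$. For $n\ge 1$, each vertex in $D_n$ has one neighbor in $D_{n-1}$ (connected by an edge in $E_{n-1}$) and exactly $N-1$ neighbors in $D_{n+1}$ (edges in $E_n$), so $\mu(x)=\mu_{n-1}+(N-1)\mu_n$. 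Summing with $|D_n|=N(N-1)^{n-1}$ gives
\[
\sum_{x\in D_n}\mu(x) = \tfrac{N}{N-1}a_{n-1}+Na_n \asymp a_n,
\]
and summing on $n$ yields $\mu(B(o,n))\asymp n\,a_n\asymp n^{\frac{2\sigma}{\sigma-1}}(\ln n)^{\frac{1}{\sigma-1}+\varepsilon}$, which is the claimed volume growth.

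For the inequality $\Delta u+u^\sigma\le 0$, the radial structure together with the computation above yields at every $x\in D_n$ with $n\ge 1$,
\[
\Delta u(x) = \frac{a_n(u_{n+1}-u_n)-a_{n-1}(u_n-u_{n-1})}{a_{n-1}+a_n}.
\]
The exponents in $\mu_n,u_n$ are chosen so that this discrete quantity mimics $(a(r)v'(r))'/(2a(r))$ for the continuous profiles $a(r)=r^{\frac{\sigma+1}{\sigma-1}}(\ln r)^{\frac{1}{\sigma-1}+\varepsilon}$ and $v(r)=\delta r^{-\frac{2}{\sigma-1}}(\ln r)^{-\frac{1}{\sigma-1}}$. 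A direct computation (setting $\gamma=\tfrac{2}{\sigma-1},\,\kappa=\tfrac{1}{\sigma-1}$) gives $a(r)v'(r)=-\delta(\ln r)^{\varepsilon}(\gamma+\kappa/\ln r)$, so
\[
(a(r)v'(r))' = -\tfrac{\delta\varepsilon\gamma}{r}(\ln r)^{\varepsilon-1}+O\!\left(\tfrac{(\ln r)^{\varepsilon-2}}{r}\right),
\]
and dividing by $a_{n-1}+a_n\sim 2a(r)$ produces $\Delta u(x)\sim -\tfrac{\delta\varepsilon\gamma}{2}\,r^{-\frac{2\sigma}{\sigma-1}}(\ln r)^{-\frac{\sigma}{\sigma-1}}$, which has exactly the decay rate of $u(x)^\sigma=\delta^\sigma r^{-\frac{2\sigma}{\sigma-1}}(\ln r)^{-\frac{\sigma}{\sigma-1}}$. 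Hence $\Delta u+u^\sigma\le 0$ reduces to a constraint $\delta^{\sigma-1}\le c(\sigma)\varepsilon$, which is arranged by choosing $\delta$ small; this is precisely where the presence of the extra $\varepsilon$ in the exponent of the logarithm in $\mu_n$ is essential.

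The hardest part will be to promote the continuous heuristic to a rigorous discrete estimate: one must verify that $a_n(u_{n+1}-u_n)-a_{n-1}(u_n-u_{n-1})$ matches $(a(r)v'(r))'|_{r=n+n_0}$ up to an error strictly smaller than the main term $r^{-1}(\ln r)^{\varepsilon-1}$. I would obtain this by Taylor-expanding $a$ and $v$ around $r=n+n_0$ to sufficient order; the remainders all carry an extra factor of $r^{-1}$ or $(\ln r)^{-1}$ and are absorbed by choosing $n_0$ large (depending on $\sigma,\varepsilon,\delta$). The root vertex $n=0$ is treated separately: by symmetry $\Delta u(o)=u_1-u_0<0$ with $|u_1-u_0|\asymp \delta/(n_0^{\frac{2}{\sigma-1}+1}(\ln n_0)^{\frac{1}{\sigma-1}})$, while $u_0^\sigma\asymp \delta^\sigma/(n_0^{\frac{2\sigma}{\sigma-1}}(\ln n_0)^{\frac{\sigma}{\sigma-1}})$, and the former dominates for $\delta$ small. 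This verifies that the stated $u$ is a positive solution on $(\mathbb{T}_N,\mu)$ and completes the sharpness example.
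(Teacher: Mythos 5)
Your proposal is correct and follows essentially the same route as the paper: both reduce to the radial recurrence $\mu(x)=\mu_{n-1}+(N-1)\mu_n$ for $x\in D_n$, verify the volume growth by summing $(N-1)^k\mu_k$, treat the root separately, and show by asymptotic expansion that the admissible range for $\delta^{\sigma-1}$ tends to $p\varepsilon=\varepsilon/(\sigma-1)$ as $n\to\infty$, so that $\delta^{\sigma-1}\lesssim\varepsilon$ (with $n_0$ large) suffices. The only organizational difference is that you route the expansion through the continuum quantity $(a v')'/(2a)$ plus a discretization error of order $r^{-2}(\ln r)^{\varepsilon}$, whereas the paper Taylor-expands the explicit discrete ratio directly; both land on the same leading constant.
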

\begin{proof}
First we show that under the above weight, the volume satisfies
\begin{eqnarray*}
\mu(B(o,n))\asymp n^{\frac{2\sigma}{\sigma-1}}(\ln n)^{\frac{1}{\sigma-1}+\varepsilon}.
\end{eqnarray*}
Indeed, for $n\geq2$, we compute
\begin{equation*}
\mu(B(o,n))=\sum_{k=0}^n\mu(D_k)\asymp\sum_{k=0}^n(N-1)^k\mu_k\asymp n^{\frac{2\sigma}{\sigma-1}}(\ln n)^{\frac{1}{\sigma-1}+\varepsilon}.
\end{equation*}
Then we check that \eqref{Elliptic} holds for the $\mu$ and $u$ given as above, that is
\begin{eqnarray}
u_1-u_0+u_0^\sigma&\leq&0,\label{eqZ0}\\
\frac{(N-1)\mu_{n}u_{n+1}+\mu_{n-1}u_{n-1}}{(N-1)\mu_{n}+\mu_{n-1}}-u_n+u_n^\sigma&\leq&0,\qquad\text{for all $n\geq 1$.}\label{eqZ}
\end{eqnarray}

The two constants $N$ and $\delta$ are closely related to $\varepsilon$ and we will first determine $n_0$ and then $\delta$. For brevity, we denote $p=\frac{1}{\sigma-1}$.  The proof of \eqref{eqZ} will be separated into the following two cases $n=0$ and $n\neq0$.

{\bf Case 1.} $n=0$.
By substituting the values of $u$, \eqref{eqZ0} is equivalent to
\begin{equation*}
\frac{\delta}{(n_0+1)^{2p}(\ln(n_0+1))^p}-\frac{\delta}{n_0^{2p}(\ln n_0)^p}+\left(\frac{\delta}{n_0^{2p}(\ln n_0)^p}\right)^\sigma\leq0,
\end{equation*}
which is satisfied by choosing $\delta\leq\delta_0$ with
\begin{equation*}
\delta_0=n_0^{2p}(\ln n_0)^p\left(\frac{(n_0+1)^{2p}(\ln(n_0+1))^p-n_0^{2p}(\ln n_0)^p}{(n_0+1)^{2p}(\ln(n_0+1))^p}\right)^{\frac1{\sigma-1}}.
\end{equation*}

\medskip

{\bf Case 2.} $n\geq1$. By substituting explicit values of $\mu$ and $u$, we see that \eqref{eqZ} is equivalent to
\begin{align}\label{eqZp}
&\frac{\delta\cdot\frac{(n+n_0)^{2p+1}(\ln (n+n_0))^{p+\varepsilon}}{(n+n_0+1)^{2p}(\ln (n+n_0+1))^{p}}+\delta\cdot\frac{(n+n_0-1)^{2p+1}(\ln (n+n_0-1))^{p+\varepsilon}}{(n+n_0-1)^{2p}(\ln (n+n_0-1))^{p}}}{(n+n_0)^{2p+1}(\ln (n+n_0))^{p+\varepsilon}+(n+n_0-1)^{2p+1}(\ln (n+n_0-1))^{p+\varepsilon}}
-\frac{\delta}{(n+n_0)^{2p}(\ln (n+n_0))^{p}}\notag\\&+\left(\frac{\delta}{(n+n_0)^{2p}(\ln (n+n_0))^{p}}\right)^\sigma\leq0.
\end{align}
Observing that $\sigma=\frac{p+1}{p}$, thus \eqref{eqZp} is equivalent to that $\delta$ satisfies
\begin{align}
&\delta^{\sigma-1}\leq (n+n_0)^{2}\ln (n+n_0)-(n+n_0)^{2p+2}(\ln (n+n_0))^{p+1}\cdot\notag\\
&\frac{\frac{(n+n_0)^{2p+1}(\ln (n+n_0))^{p+\varepsilon}}{(n+n_0+1)^{2p}(\ln (n+n_0+1))^{p}}+\frac{(n+1)^{2p+1}(\ln (n+n_0-1))^{p+\varepsilon}}{(n+n_0-1)^{2p}(\ln (n+1))^{p}}}{(n+n_0)^{2p+1}(\ln (n+n_0))^{p+\varepsilon}+(n+n_0-1)^{2p+1}(\ln (n+n_0-1))^{p+\varepsilon}}.\label{eqZdelta}
\end{align}
By some amount of computation, we have that
\begin{equation}\label{eqZlimit}
\lim_{n\rightarrow\infty}n^{2}\ln n-n^{2p+2}(\ln n)^{p+1}\cdot\frac{\frac{n^{2p+1}(\ln n)^{p+\varepsilon}}{(n+1)^{2p}(\ln (n+1))^{p}}+\frac{(n-1)^{2p+1}(\ln (n-1))^{p+\varepsilon}}{(n-1)^{2p}(\ln (n-1))^{p}}}{n^{2p+1}(\ln n)^{p+\varepsilon}+(n-1)^{2p+1}(\ln (n-1))^{p+\varepsilon}}
=p\varepsilon.
\end{equation}
(The details are as follows.
we will use $\frac{\ln (n-1)}{\ln n}=1-\frac{1}{n\ln n}-\frac{1}{2n^2\ln n}+o(\frac{1}{n^2\ln n})$ frequently.
The function of $n$ under the limit on the LHS of \eqref{eqZlimit} can be written as
\begin{equation*}
n^{2}\ln n\left(1-\frac{\left(1-\frac{1}{n+1}\right)^{2p}\left(\frac{\ln n}{\ln(n+1)}\right)^p+\left(1-\frac{1}{n}\right)\left(\frac{\ln (n-1)}{\ln n}\right)^{\varepsilon}}{1+\left(1-\frac{1}{n}\right)^{2p+1}\left(\frac{\ln (n-1)}{\ln n}\right)^{p+\varepsilon}}\right),
\end{equation*}
where we use $\left(1-\frac{1}{n}\right)^{\alpha}=1-\frac{\alpha}{n}-\frac{\alpha(\alpha-1)}{2n^2}$ with $\alpha>0$, then the above Taylor expansion for $\frac{\ln (n-1)}{\ln n}$ gives
\begin{align*}
&n^{2}\ln n\left(1-\frac{2-\frac{2p}{n+1}-\frac{1}{n}-\frac{p}{(n+1)\ln (n+1)}-\frac{\varepsilon}{n\ln n}+\frac{2p(2p-1)}{2(n+1)^2}+\frac{4p^2-p}{2(n+1)^2\ln (n+1)}+\frac{\varepsilon}{2n^2\ln n}}{2-\frac{2p+1}{n}-\frac{p+\varepsilon}{n\ln n}+\frac{2p(2p+1)}{2n^2}-\frac{p+\varepsilon}{2n^2\ln n}+\frac{(2p+1)(p+\varepsilon)}{n^2\ln n}}\right)+o(1)\\
&=n^{2}\ln n\left(\frac{\frac{2p\varepsilon}{n^2\ln n}}{2}\right)+o(1)\\
&=p\varepsilon+o(1),
\end{align*}
where $o(1)\rightarrow0$ as $n\rightarrow\infty$, and hence proves \eqref{eqZlimit}.)

This implies that there exists some large $n_0$ such that for all $n\geq 0$, the RHS of \eqref{eqZdelta} is
bounded from above by $\delta_1:=\ \frac{p\varepsilon}{2}$.

Finally, take $n_0$ as above and $\delta=\min\{\delta_0,\delta_1\}$, we see that $u$ is a solution to \eqref{Elliptic}.
\end{proof}

\end{document}